\newtheorem*{acknowledgement}{Acknowledgement}
\newtheorem{lemma}{Lemma}
\newtheorem{remark}{Remark}
\newtheorem{theorem}{Theorem}
\newtheorem{example}{Example}
\numberwithin{equation}{section}
\begin{document}
	
	\title[Self-similar solutions to the MCF in $\mathbb{R}^{3}$]{Self-similar solutions to the MCF in $\mathbb{R}^{3}$}
	
	\author{Benedito Leandro}
	\author{Rafael Novais}
	\author{Hiuri F. S. dos Reis}
	
	\address[B. Leandro]{Universidade Federal de Goi\'as - UFG, IME, 74690-900, Goi\^ania - GO, Brazil.}
	\email{bleandroneto@ufg.br}
	
	\address[R. Novais]{Universidade Federal de Goi\'as - UFG, IME, 74690-900, Goi\^ania - GO, Brazil.}
	\email{rnovais87@gmail.com}
	
	\address[H. dos Reis]{Instituto Federal de Goiás - IFG, 76400-000, Uruaçu - GO, Brazil.} 
	\email{hiuru.reis@ifg.edu.br}

	\thanks{Rafael Novais was partially supported by PROPG-CAPES [Finance Code 1811476].}
	
	\keywords{Mean curvature flow; Self-similar solutions; Ruled surfaces; Surfaces of Revolution}

	\subjclass[2020]{53E10; 35C08}
	
	\date{\today}
	\begin{abstract} 
        In this paper we make an analysis of self-similar solutions for the mean curvature flow (MCF) by surfaces of revolution and ruled surfaces in $\mathbb{R}^{3}$. We prove that self-similar solutions of the MCF by non-cylindrival surfaces and conical surfaces in $\mathbb{R}^{3}$ are trivial. Moreover, we characterize the self-similar solutions of the MCF by surfaces of revolutions under a homothetic helicoidal  motion in $\mathbb{R}^{3}$ in terms of the curvature of the generating curve. Finally, we characterize the self-similar solutions for the MCF by cylindrical surfaces under a homothetic helicoidal motion in $\mathbb{R}^3$. Explicit families of exact solutions for the MCF by cylindrical surfaces in $\mathbb{R}^{3}$ are also given. 
	\end{abstract}

	\maketitle
	\section{Introduction}\label{intro}
	
	    The mean curvature flow (MCF) is a geometric evolution equation. In other words, it is a way to let submanifolds evolve in a given manifold over time to minimize its volume. Surfaces moving in a self-similar way under the MCF are important in the singularity theory of the flow. In \cite{Halldorsson1,Halldorsson2}, the author gave a complete classification of all self-similar solutions to the curve shortening flow (CSF) in the Euclidean and in the Minkowski plane. Even though several results given about the singularities of the mean curvature flow are known (cf. \cite{Colding} and the references therein), the classification of such solutions for the MCF is significantly harder in higher dimensions, even exhibit exact solutions is quite rare (cf. \cite{Colding,leandro,dosReis,dosReis1}).
	
        Let $M^2$ be an $2$-dimensional manifold and assume that $\widehat{X}^0:M^2\longrightarrow \mathbb{R}^{3}$ smoothly immerses $M^2$ as a hypersurface in the Euclidean space $\mathbb{R}^{3}$. We say that $M_0=\widehat{X}^0(M^2)$ is moved along its mean curvature if there is a whole family $\widehat{X}(\cdot,t)$, $t\in\mathfrak{I}$, of smooth immersions with corresponding hypersurfaces $M_t=\widehat{X}(\cdot,t)(M^2)$ such that
        \begin{eqnarray*}
            \left\{
                \begin{array}{lcc}
                    \displaystyle\frac{\partial \widehat{X}}{\partial t}(p,\,t)=H(p,\,t)N(p,\,t), \quad p\in M^2\\\\
                     \widehat{X}(\cdot,\,0)= \widehat{X}^0.
                \end{array}
            \right.
        \end{eqnarray*}
        Here $H^{t}=H(\cdot,t)$ is the mean curvature and $N^{t}=N(\cdot,t)$ is a unit normal vector field of $M_t$.   
    
        We say that a family of smooth immersions $\widehat{X}(\cdot,t)$ is a self-similar motion (cf. \cite{Halldorsson}) of $M^2$ if $\widehat{X}(p,\,t)=L(t)\widehat{X}(p,\,0)$, where $L(t):\mathbb{R}^{3}\longrightarrow \mathbb{R}^{3}$ is a one parameter family of continuous homotheties such that $L(0)=Id$. Thus,    
        $$\widehat{X}(p,\,t)=\sigma(t)\Gamma(t)\widehat{X}(p,\,0)+\Theta(t);\quad p\in M^2,\,t\in\mathfrak{I}.$$ 
        Here, $\mathfrak{I}$ is an interval containing $0$ and $\sigma:\mathfrak{I}\rightarrow\mathbb{R}$, $\Gamma:\mathfrak{I}\rightarrow SO(3)$ and $\Theta:\mathfrak{I}\rightarrow\mathbb{R}^3$ are differentiable functions such that $\sigma(0)=1,\,\Gamma(0)=Id$ and $\Theta(0)=0$ (cf. \cite{Halldorsson1,Halldorsson}). Under our settings, this self-similar motion is the mean curvature flow of $M^2$ if and only if the equation 
        $$\left\langle\frac{\partial \widehat{X}}{\partial t}(p,\,t),N(p,\,t)\right\rangle=H(p,\,t)$$
        holds for all $p\in M^2$, $t\in\mathfrak{I}$. Considering $t=0$ we have $H=H(\cdot,0)$ and $N=N(\cdot,0)$. Therefore, 
        $$H=\sigma'(0)\langle X,\,N\rangle+\langle \Gamma'(0) X,\,N\rangle+\langle \Theta'(0),\,N\rangle,$$
        where $\sigma$, $\Gamma$ and $\Theta$ stand for dilation, rotation and translation in $\mathbb{R}^{3}$, respectively.

        It is important to say that minimal surfaces (i.e., $H=0$) are {\it trivial} solutions to the MCF. Immediately, we can infer that helicoids, catenoids and planes are trivial solutions to the MCF in $\mathbb{R}^{3}$. These three surfaces are relevant since we are considering surfaces of revolution and ruled surfaces as initial data for the MCF.
    
        The aim of the following theorem is to provide a classification of self-similar solutions for the MFC where the initial data $X$ is a non-cylindrical ruled surface in $\mathbb{R}^{3}$. The grim reaper solution is an example of cylindrical solution for the MCF (cf. Example \ref{grim}). Therefore, it is natural to believe that it is possible to build non-cylindrical solutions too. Note that the helicoid is a non-cylindrical ruled surface, so it is a trivial example for this case. The next theorem was inspired by these examples. We will consider the non-cylindrical ruled surfaces parameterized by lines of striction, i.e., $\langle\beta',w'\rangle=0$ and $w'\neq0$ (cf. \cite{CARMO GD}).

        \begin{theorem}\label{C1}
            Let $X:U\subset \mathbb{R}^2\longrightarrow \mathbb{R}^{3}$ be a non-cylindrical ruled surface in $\mathbb{R}^{3}$, $$X(s,\,u)=\beta(s)+u w(s),$$
            where $\beta(s)$ is a curve in $\mathbb{R}^{3}$ and $w(s)\in T_{\beta(s)}\mathbb{R}^{3}$,\, $|w|=1$. 
            Suppose that $\widehat{X}^{t}(s,\,u)=L(t)X(s,\,u)$ is a self-similar solution to the MCF and $L(t)=\sigma(t)\Gamma(t)+\Theta(t)$ is a homothetic helicoidal motion in $\mathbb{R}^{3}$. Then, $X$ is trivial.
        \end{theorem}
        
           It is important to say that the homothetic helicoidal motion that we are considering is a motion involving rotation, translation and dilation. Nonetheless, we analyse all particular cases of motion. For instance, the translation soliton is a particular and important case (see more in the proof of Theorem \ref{C1}). The helicoidal motion was studied by Halldorsson in \cite[Section 2]{Halldorsson} and inspired this work.
        
         In \cite{leandro}, the authors characterized the solutions for the MCF on the Heisenberg group by ruled surfaces and explicit examples are provided. Moreover, A few months before the first version of this research paper appeared, L\'opez \cite{lopez2021} proved a classification for $\alpha$-self-similar solution for the MCF by
ruled surfaces. He proved that this type of solution must be a cylindrical surface. We can see from \cite[Definition 1.1]{lopez2021} that Theorem \ref{C1} consider a different type of solutions for the MCF in $\mathbb{R}^{3}$ (cf. \cite[Theorem 1.2]{lopez2021}). Moreover, we will provide explicit examples of cylindrical solutions for the MCF in $\mathbb{R}^3$.
   
        Another important type of surfaces in $\mathbb{R}^{3}$ are the surfaces of revolution. In \cite{Halldorsson} the author provided an analysis of self-similar solutions to the MCF by helicoidal surfaces. Here, we will generalize this idea and characterize the self-similar solutions by surfaces of revolution which move by a homothetic helicoidal movement in $\mathbb{R}^3$ (see the definition of this motion in the proof of Theorem \ref{C40} and Theorem \ref{C3}).

        \begin{theorem}\label{C40}
            Let $X:U\subset \mathbb{R}^2\longrightarrow \mathbb{R}^{3}$ be a surface of revolution in $\mathbb{R}^{3}$ satisfying
            \begin{eqnarray}\label{paramrevo} 
                X(u,\,s)=(\phi(s)\cos(u),\,\phi(s)\sin(u),\,\psi(s)),
            \end{eqnarray}
            where $\phi,\,\psi$ are smooth real functions. 
            Consider that $\widehat{X}^{t}(s,\,u)=L(t)X(s,\,u)$ is a self-similar solution to the MCF with initial condition $X$ such that  $L(t)$ is a homothetic helicoidal motion in $\mathbb{R}^{3}$.
            Then $X$ is an initial data for the MCF if and only if the curvature $\kappa$ of $\alpha(s)=(0,\,\phi(s),\,\psi(s))$ is given by
            \begin{eqnarray}\label{caracteriSR}
                \kappa=\frac{1}{|\tau|}\left(2 c\langle\alpha,\,\eta\rangle+ 2b\langle e_3,\,\eta\rangle+\frac{1}{\phi}\langle e_3,\,\tau\rangle\right),
            \end{eqnarray}
            where $\tau=\alpha'$, $\eta=(0,\,\psi',\,-\phi')$ and $e_3=(0,0,1)$. 
        \end{theorem}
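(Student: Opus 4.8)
The plan is to make the notion of an \emph{homothetic helicoidal motion} precise and then evaluate, term by term, the self-similarity identity
\[
H=\sigma'(0)\langle X,\,N\rangle+\langle \Gamma'(0) X,\,N\rangle+\langle \Theta'(0),\,N\rangle
\]
recorded in the Introduction. After a rigid change of coordinates we place the common axis of the motion on the $z$-axis, so that $L(t)X=\sigma(t)\Gamma(t)X+\Theta(t)$ with $\Gamma(t)$ the rotation about the $z$-axis through an angle proportional to $t$ and $\Theta(t)$ a translation along the $z$-axis; then $\sigma'(0)=2c$, $\Gamma'(0)X=a\,(-X_2,X_1,0)$ and $\Theta'(0)=(0,0,2b)$ for constants $a,b,c$. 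Writing $R_u\in SO(3)$ for the rotation about the $z$-axis through angle $u$, the parametrization \eqref{paramrevo} is $X(u,s)=R_u\alpha(s)$ with $\alpha(s)=(0,\phi(s),\psi(s))$, and a short computation gives $X_u$, $X_s$ and the unit normal $N=\frac{1}{|\tau|}R_u\eta$, where $\tau=\alpha'=(0,\phi',\psi')$ and $\eta=(0,\psi',-\phi')$.

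With this frame fixed I would compute the four quantities appearing in the identity. Since $R_u$ is orthogonal and fixes $e_3$, one obtains $\langle X,N\rangle=\langle\alpha,\eta\rangle/|\tau|$ and $\langle\Theta'(0),N\rangle=2b\langle e_3,\eta\rangle/|\tau|$, both independent of $u$. The rotational term vanishes identically: the surface of revolution is invariant under $R_u$, so $t\mapsto\Gamma(t)X(s,u)$ is merely a reparametrization in the $u$ variable, whence $\Gamma'(0)X=a\,X_u$ is tangent and $\langle\Gamma'(0)X,N\rangle=0$. This is the one place where compatibility of the revolution axis with the axis of the motion enters, and it is exactly what makes $a$ disappear from the final formula. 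A standard computation of the coefficients $E,F,G,e,f,g$ of the first and second fundamental forms of \eqref{paramrevo} then gives $H=\dfrac{\phi''\psi'-\psi''\phi'}{|\tau|^3}-\dfrac{\psi'}{\phi|\tau|}$; recognizing the first summand as the signed curvature $\kappa=\langle\alpha'',\eta\rangle/|\tau|^3$ of $\alpha$ and the second as $\frac{1}{\phi|\tau|}\langle e_3,\tau\rangle$, this reads $H=\kappa-\frac{1}{\phi|\tau|}\langle e_3,\tau\rangle$.

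Substituting these expressions into the self-similarity identity and solving for $\kappa$ produces precisely \eqref{caracteriSR}. Because every step is an equivalence of equalities --- and the resulting equation is automatically independent of $u$ --- both directions of the ``if and only if'' follow at once, the constants $b$ and $c$ being the translational and homothety rates of $L(t)$. The only real obstacle is bookkeeping: one must fix mutually compatible orientation conventions for $N$, for the sign of $\kappa$, and for the normalizations $\sigma'(0)=2c$, $\Theta'(0)=(0,0,2b)$, so that the two factors of $2$ in \eqref{caracteriSR} come out correctly; conceptually, nothing beyond the computation sketched above is required.
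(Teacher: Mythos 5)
Your proof is correct and follows essentially the same route as the paper's: evaluate the self-similarity identity at $t=0$ for a homothetic helicoidal motion about the axis of revolution, observe that the rotational term is tangential (so $a$ drops out), and rewrite the resulting ODE in terms of the signed curvature of the profile curve $\alpha$. The only divergence is bookkeeping: the paper takes $H$ to be the \emph{average} of the principal curvatures with $\sigma'(0)=c$ and $\Theta'(0)=b\,e_3$, so the factors of $2$ in \eqref{caracteriSR} arise from clearing the $1/2$ in $H$, rather than from your normalizations $\sigma'(0)=2c$, $\Theta'(0)=2b\,e_3$.
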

        
        The above theorem shows us that to get self-similar solutions for the MCF by surfaces of revolution we need to analyze the curvature $\kappa$ of curves in $\mathbb{R}^2$ satisfying \eqref{caracteriSR}. In \cite{Halldorsson1}, Halldorsson gave us all self-similar solutions to the curve shortening flow in the plane. The condition of such plane curves is similar to \eqref{caracteriSR}. When were plotting the solutions for \eqref{caracteriSR} we realized that some graphs were similar to those in \cite{Halldorsson1}. We will discuss more about this examples of Theorem \ref{C40} in Section \ref{examples}.

        In the next result we will consider the cylindrical ruled surfaces as initial data for the MCF in the Euclidean space. The cylinder itself is an example of ruled surface initial data for a self-similar solution of the MCF in $\mathbb{R}^{3}$. Another important example for this case is the grim reaper solution (cf. \cite{martin}). Inspired by those examples we will characterize all such self-similar solutions by cylindrical surfaces and then, in Section \ref{examples}, we discuss some examples.
        
        \begin{theorem}\label{C3}
            Let $X:U\subset \mathbb{R}^2\longrightarrow \mathbb{R}^{3}$ be a cylindrical ruled surface in $\mathbb{R}^{3}$. 
            If $\widehat{X}^{t}(s,\,u)=L(t)X(s,\,u)$ is a self-similar solution to the MCF with initial condition $$X(s,\,u)=\beta(s)+uw(s),$$ where $L(t)$ is a homothetic helicoidal motion, $\beta$ a plane curve,  $w(s)=(x_0,\,y_0,\,z_0)$ and $|w|=1,$
            then $X$ is given by one of the following surfaces:
            \begin{itemize}
                \item[(I)] $\beta(s)=(0,\,h(s),\,q(s))$ where 
                    \begin{eqnarray*}
                    ax_{0}\langle w,\,\tau\rangle&=&0;\\   2cx_0\langle\beta,\,\eta\rangle+2b\langle w,\,\eta\rangle-2ax_{0}\langle\beta,\,\tau\rangle&=&\dfrac{x_{0}|\tau|^3\kappa}{|\tau|^2 -\langle w,\,\tau\rangle^{2}}. 
                    \end{eqnarray*}
                \item[(II)] $\beta(s)=(h(s),\,q(s),\,0)$ where 
                    \begin{eqnarray*}
                        a[h'-x_{0}\langle w,\,\tau\rangle]&=&0;\\   2cz_0 \langle\beta,\,\eta\rangle +2bz_{0}\langle e_1,\,\eta\rangle+2aq\langle w,\,\eta\rangle&=&\dfrac{z_{0}|\tau|^3\kappa}{|\tau|^2 -\langle w,\,\tau\rangle^{2}}. 
                    \end{eqnarray*} 
            \end{itemize}
            Here, $a$, $b$ and $c$ are constants related with rotation, translation and dilation of the surface $X$ in the Euclidean space. Moreover, $\tau$, $\eta$ and $\kappa$ are, respectively, the tangent, the normal and the curvature of $\beta.$
        \end{theorem}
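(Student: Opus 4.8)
The plan is to set up the computation exactly as in Theorems \ref{C40} and \ref{C1}, specializing to a cylindrical ruled surface whose rulings all point in the fixed direction $w=(x_0,y_0,z_0)$ with $|w|=1$, and whose base $\beta$ is a plane curve. First I would write down the general homothetic helicoidal motion $L(t)X = \sigma(t)\Gamma(t)X + \Theta(t)$, where $\Gamma(t)$ is rotation about a fixed axis, and record the first-order data at $t=0$: $\sigma'(0)=2c$ (the dilation constant), $\Gamma'(0)$ is a skew-symmetric matrix encoding the rotation constant $a$, and $\Theta'(0)$ encodes the translation constant $b$ (up to the normalizations the statement refers to). Plugging into the self-similar equation $H=\sigma'(0)\langle X,N\rangle+\langle\Gamma'(0)X,N\rangle+\langle\Theta'(0),N\rangle$ from the introduction gives one scalar PDE relating the geometry of $X$ to the constants $a,b,c$.

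Next I would compute the left-hand side $H$ and the normal $N$ for the parametrization $X(s,u)=\beta(s)+uw$. Since the surface is cylindrical, $N$ is independent of $u$: it is the unit vector in the plane orthogonal to $w$ that is also orthogonal to $\beta'=\tau$, namely $N = \frac{w\times\tau}{|w\times\tau|}$, and $|w\times\tau|^2 = |\tau|^2-\langle w,\tau\rangle^2$. The mean curvature of a generalized cylinder is $H = \frac{\kappa_{\mathrm{plane}}}{\text{(something)}}$; more precisely one gets $H=\dfrac{|\tau|^3\kappa}{\big(|\tau|^2-\langle w,\tau\rangle^2\big)^{3/2}}\cdot\big(|\tau|^2-\langle w,\tau\rangle^2\big)/|\tau|^2$ — I would just carry out the first fundamental form / second fundamental form computation carefully rather than guess, but the upshot is that $H$ is a rational expression in $|\tau|$, $\langle w,\tau\rangle$, and $\kappa$ with no $u$-dependence. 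The right-hand side, however, contains the term $\langle\Gamma'(0)X,N\rangle = \langle\Gamma'(0)\beta,N\rangle + u\langle\Gamma'(0)w,N\rangle$, which is affine in $u$. Matching the $u^1$ coefficient forces $a\,\langle\Gamma'(0)w,N\rangle/(\text{const}) = 0$, i.e. $a$ times a contraction of $w$ against the normal vanishes; matching the $u^0$ coefficient gives the second displayed equation in each case.

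The case split into (I) and (II) comes from the requirement that $\beta$ be a \emph{plane} curve together with the constraint that the whole configuration be compatible with a helicoidal motion whose axis is fixed: either the plane of $\beta$ contains the axis of rotation (take it to be the $yz$-plane, giving $\beta(s)=(0,h(s),q(s))$, case (I)), or the plane of $\beta$ is orthogonal to the axis (take it to be the $xy$-plane, $\beta(s)=(h(s),q(s),0)$, case (II)); any other relative position is ruled out because $\widehat{X}^t$ would fail to stay a graph of the same type, exactly the mechanism used in Theorem \ref{C1}. In each sub-case I would substitute the specific form of $\beta$ and $w$, re-express $\langle\Gamma'(0)\beta,N\rangle$ and $\langle\Gamma'(0)w,N\rangle$ in terms of $\langle\beta,\eta\rangle$, $\langle w,\eta\rangle$, $\langle\beta,\tau\rangle$ and the standard basis vectors $e_1,e_3$ (using $\eta$ = in-plane normal to $\tau$), and collect terms to land on precisely the two displayed equations.

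The main obstacle I expect is bookkeeping, not conceptual: correctly identifying the three geometric constants $a,b,c$ with the entries of $\Gamma'(0)$, $\Theta'(0)$, $\sigma'(0)$ in a way that makes the coefficients in (I) and (II) come out with the stated signs and factors, and being careful that the normal $N$ (hence $H$ and all the inner products) is consistently oriented — a sign error there propagates into every term. A secondary subtlety is making the "plane curve + helicoidal axis" dichotomy airtight: one must check that the axis of the rotation $\Gamma(t)$ cannot be in "general position" relative to $\mathrm{span}(\beta)$, which I would argue by differentiating the planarity condition $\langle\widehat{X}^t-\text{pt},\nu\rangle=0$ for a fixed normal $\nu$ at $t=0$ and seeing that it forces $\nu$ to be an eigenvector of $\Gamma'(0)$, i.e. $\nu$ is the axis direction or lies in the rotation plane. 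Once that is settled, (I) and (II) are the only survivors and the explicit equations follow by direct substitution.
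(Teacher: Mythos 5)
Your proposal follows essentially the same route as the paper: the paper first proves a lemma for cylindrical surfaces ($w'=0$) computing $E=|\beta'|^2$, $F=\langle\beta',w\rangle$, $G=1$, $f=g=0$, $N=\frac{\beta'\wedge w}{\sqrt{|\beta'|^2-F^2}}$ and $H=\frac{\langle\beta'',\beta'\wedge w\rangle}{2(|\beta'|^2-F^2)^{3/2}}$, then matches the $u^1$ and $u^0$ coefficients of $\langle L'(0)X,N\rangle=H$ to get exactly your two conditions $\langle\Gamma'(0)w,\beta'\wedge w\rangle=0$ and $\langle L'(0)\beta,\beta'\wedge w\rangle=\frac{1}{2(|\beta'|^2-F^2)}\langle\beta'',\beta'\wedge w\rangle$, and finally substitutes the two explicit forms of $\beta$ with $\Gamma$ a rotation about the $x$-axis and $\Theta$ a translation along it. Your core computation and the coefficient-matching in $u$ are correct (the $\sigma'(0)=2c$ versus $c$ issue is only the normalization you already flagged).

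The one step that would fail as written is your justification of the dichotomy (I)/(II): a homothety $\sigma(t)\Gamma(t)(\cdot)+\Theta(t)$ carries any plane curve to a plane curve (in the rotated plane with normal $\Gamma(t)\nu$), so $\widehat{X}^t$ never ``fails to stay a graph of the same type,'' and differentiating the planarity condition does not force $\nu$ to be an eigenvector of $\Gamma'(0)$ because the correct condition uses the moving normal $\Gamma(t)\nu$, which kills the $\Gamma'(0)$ term. Note, however, that the paper does not prove this dichotomy either: it simply treats $\beta\subset\{x=0\}$ and $\beta\subset\{z=0\}$ as the two configurations to be analyzed relative to the fixed helicoidal axis, so your attempted strengthening addresses a gap the paper leaves open, but the mechanism you propose does not close it.
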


        \begin{remark}
            It is important to say that a particular case of the first item of Theorem \ref{C3} was already provided in \cite{Halldorsson1}. In fact, considering that $\beta$ is parameterized by the arc length such that $\langle w,\,\tau\rangle=0$, we get the curves given by Halldorsson in \cite{Halldorsson1}, see equation (2.3). These curves are solutions for the curve shortening flow (CSF) in $\mathbb{R}^2$ and they were already classified by him. In Section \ref{examples} we will prove explicit and non-trivial solutions for Item (I). The grim reaper is one of those explicit examples. 
            
            In Item (II) the solutions that do not rotate ($a=0$) such that $\langle w,\,\tau\rangle=0$ also are particular cases of \cite{Halldorsson1}, where $\beta$ is parameterized by the arc length. Moreover, the only non-trivial solutions must have $a=0$, for $\langle w,\,\tau\rangle$ null or not. We will discuss this in Section \ref{examples}.
        \end{remark}

        Let us briefly discuss the organization of this manuscript. In Section 2 we will remember the geometry of ruled surfaces and surfaces of revolution. In Section 3 we present the proof of the main results. Then, in Section 4 we first provide numerical solutions  of Theorem \ref{C40}. Finally, we discuss examples of Theorem \ref{C3}, showing some cases of trivial and exact solutions. 
	\section{Background}\label{secBack}

        \subsection{Ruled surfaces in $\mathbb{R}^3$}\label{sec3}
        
            Let $M^2\subset\mathbb{R}^3$ be a noncylindrical ruled surface (cf. \cite{CARMO GD}). Therefore, $M^2$ is given by 
            \begin{equation}\label{regrada R}
                X(s,u)=\beta(s)+u w(s),
            \end{equation}
            where $\beta$ is a curve of $\mathbb{R}^3$ and $w(s)\in T _ {\beta (s)}\mathbb{R}^3$, $|w|=1$, is a vector such that $\langle\beta', w'\rangle = 0$. The assumption that $M^2\subset\mathbb{R}^3$ is noncylindrical is expressed by $w'(t)\neq0$ for all $t$. In this case we consider $\langle w',\,w' \rangle=1$.
            
            Let us establish some notation:
            $$X_s = \beta'(s)+u\cdot w'(s),\quad\quad X_u = w(s),\quad\quad X_s\wedge X_u=\beta'\wedge w+u\cdot w'\wedge w.$$
            Thus, $\langle w',w\rangle=\langle w',\beta'\rangle=0$, and then
            \begin{eqnarray}\label{lambda1}
                \beta'\wedge w=\lambda w' 
            \end{eqnarray}
            for some function $\lambda=\lambda(s)$. The function $\lambda$ is called the {\it distribution parameter}. 
            We can rewrite
            $$X_s\wedge X_u=\lambda w'+u\cdot w'\wedge w,$$ and hence
            \begin{eqnarray*}
                EG-F^2 = |X_s\wedge X_u|^2 = \lambda^2+u^2.\\
            \end{eqnarray*}     
            
            The unitary normal vector field over $X(s,u)$ is given by 
            \begin{eqnarray}\label{normalvectorfield}
                N=\frac{X_s\wedge X_u}{|X_s\wedge X_u|}=\frac{\lambda w'+u\cdot w'\wedge w}{\sqrt{\lambda^2+u^2}}.
            \end{eqnarray}
            Moreover, the coefficients of the first fundamental form are 
            \begin{align*}
                E = \langle X_{s},X_{s}\rangle = |\beta'|^2+u^2; && F = \langle X_{s},X_{u}\rangle = \langle \beta',w\rangle; &&  G = \langle X_{u},X_{u}\rangle = 1.
            \end{align*}     
            Furthemore, $$X_{ss}=\beta''+u\cdot w'',\quad\quad X_{su}=w', \quad\quad X_{uu}=0.$$
            Thus, the coefficients of the second fundamental form are
            \begin{align*}
                g = \langle N,X_{uu}\rangle = 0;  && f = \langle N,X_{su}\rangle = \frac{\lambda}{\sqrt{\lambda^2+u^2}};
            \end{align*}     
            \begin{eqnarray*}
                e &=& \langle N,X_{ss}\rangle = \frac{1}{\sqrt{\lambda^2+u^2}}(\lambda\langle w',\beta''\rangle+\lambda\cdot u\langle w',w''\rangle-u\langle w\wedge w',\beta''\rangle-u^2\langle w\wedge w',w''\rangle).\\
            \end{eqnarray*}  
            
             Considering the orthonormal frame $\{w,w',w\wedge w'\}$ in $\mathbb{R}^3$ we have
            \begin{eqnarray}\label{beta'}
                \beta' = \langle \beta',w\rangle w+\langle \beta',w'\rangle w'+\langle \beta',w\wedge w'\rangle w\wedge w'= F w+\lambda w\wedge w'.
            \end{eqnarray}       
            Since $\langle\beta',w'\rangle=0$ and from the fact that $w$ is unitary, we get 
            \begin{eqnarray*}
                \langle\beta'',w'\rangle = -\langle\beta',w''\rangle = F-\lambda J,
            \end{eqnarray*} 
            where $J:=\langle w\wedge w', w''\rangle$. Taking the derivative of
            \begin{eqnarray}\label{complambda}
                \lambda=\langle\beta'\wedge w,\,w'\rangle
            \end{eqnarray}
            we have 
            $$\beta''\wedge w+\beta'\wedge w' = \lambda'w'+\lambda w''.$$ Therefore, since $|w'|=1$ we get
            \begin{eqnarray*}
                \langle\beta''\wedge w,w'\rangle&=& \lambda'.
            \end{eqnarray*}      
             We can conclude that $$e=\frac{\lambda(F-\lambda J)-u\lambda'-u^2J}{\sqrt{\lambda^2+u^2}}.$$
             
            Finally, we get the gaussian and the mean curvature, respectively,
            \begin{equation}\label{gaussiana r3}
                K=\frac{eg-f^2}{EG-F^2}=\frac{-\lambda^2}{(\lambda^2+u^2)^2}
            \end{equation} 
            and
            \begin{equation}\label{media r3}
                H=\frac{Eg-2fF+Ge}{2(EG-F^2)}=-\frac{\lambda F+\lambda^2J+u\lambda'+u^2J}{2(\lambda^2+u^2)^{3/2}}.
            \end{equation}
        \subsection{Surfaces of Revolution in $\mathbb{R}^3$}
        
        Let $X:U\subset \mathbb{R}^2\longrightarrow \mathbb{R}^{3}$ be a revolution surface in $\mathbb{R}^{3}$ satisfying $$X(u,\,s)=(\phi(s)\cos(u),\,\phi(s)\sin(u),\,\psi(s)),$$  where $\phi,\,\psi$ are smooth real functions.

        We start by providing the normal vector field:
        \begin{eqnarray}\label{Nrevolution}
            N(s,\,u)=\dfrac{1}{\sqrt{\phi'^2+\psi'^2}}(\psi'\cos u,\,\psi'\sin u,\, -\phi')
        \end{eqnarray}

        From a straightforward computation we get the coefficients of the first and second fundamental formulas.
        \begin{align*}
            E=\phi^{2}; && F = 0; &&  G = \phi'^2 + \psi'^2;
        \end{align*}   
        \begin{align*}
            e=\dfrac{-\phi\psi'}{\sqrt{\phi'^2 + \psi'^2}};&& f = 0;\,&&  g=\dfrac{\phi''\psi'-\phi'\psi''}{\sqrt{\phi'^2 + \psi'^2}}.
        \end{align*} 
        
        Then, the gaussian and the mean curvature are given by:
        \begin{equation*}
            K=\frac{-\psi'^2\phi''+\phi'\psi'\psi''}{\phi(\phi'^2 + \psi'^2)^2}
        \end{equation*} and
        \begin{equation}
            H=\dfrac{\phi(\phi''\psi'-\phi'\psi'')-\psi'(\phi'^2 + \psi'^2)}{2\phi(\phi'^2 + \psi'^2)^{3/2}}.\label{media r31}
        \end{equation}
        
        It is essential to remember that Efimov's theorem for surfaces of revolution says that there is no regular complete surface of revolution in $\mathbb{R}^3$ with $K\leq\delta<0,$ for some real constant $\delta>0$.
    \section{Proof of the Main Results}

        \begin{lemma}\label{T3}
            Let $X:U\subset \mathbb{R}^2\longrightarrow \mathbb{R}^{3}$ be a noncylindrical ruled surface in $\mathbb{R}^{3}$ satisfying \eqref{regrada R}. Then $\widehat{X}^{t}(s,\,u)=L(t)X(s,\,u)$ is a self-similar solution to the MCF on $\mathbb{R}^{3}$ if and only if
            \begin{equation}\label{lemma1}\
                \begin{cases}
                    \langle \Gamma'(0)w, w'\wedge w\rangle=0;\\
                    \lambda\langle \Gamma'(0)w, w'\rangle+\langle \Gamma'(0)\beta, w'\wedge w\rangle+\langle \Theta'(0), w'\wedge w\rangle+\sigma'(0)\langle \beta, w'\wedge w\rangle=0;\\
                    \lambda(\langle \Gamma'(0)\beta, w'\rangle+\langle \Theta'(0), w'\rangle+\sigma'(0)\langle \beta,\,w'\rangle)+\dfrac{1}{2}\langle w\wedge w', w''\rangle=0;\\
                    \lambda\langle \beta',w\rangle=0.
                \end{cases}
            \end{equation}         
            where $L(t)=\sigma(t)\Gamma(t)+\Theta(t)$ is a homothetic helicoidal motion in $\mathbb{R}^{3}$. Here $\sigma$, $\Gamma$ and $\Theta$ stand for the dilation, rotation and translation in $\mathbb{R}^{3}$, respectively. Moreover, $\lambda=\langle\beta'\wedge w,\,w'\rangle$ must be constant.
        \end{lemma}
        
        \begin{proof}[Proof of Lemma \ref{T3}] 
            Let $X:U\subset \mathbb{R}^2\longrightarrow \mathbb{R}^{3}$ be a noncylindrical ruled surface in $\mathbb{R}^{3}$ satisfying \eqref{regrada R}.
            Consider a one parameter family of surfaces $\widehat{X}^{t}(s,u)=L(t)X(s,u)$, where $L(t)$ is an homothetic helicoidal motion of $\mathbb{R}^3$.
            We can rewrite $\widehat{X}$ in the following form
            \begin{equation*}
                \widehat{X}^{t}(s,u)=\sigma(t)\Gamma(t)X(s,u)+\Theta(t), 
            \end{equation*}
            where $\Gamma$ and $\Theta$ stand for rotations and translations in $\mathbb{R}^{3}$, respectively. Here, the dilation function $\sigma$ determines the scaling and $\sigma(0)=1.$
            That being said, $\widehat{X}^{\tau}$ is a solution to the MCF in $\mathbb{R}^3$ if and only if
            $$\begin{cases}
                \dfrac{\partial\widehat{X}^{t}}{\partial t}=H^{t}(s,u)N^{t}(s,u);\\\\
                \widehat{X}^{0}(s,u)= X(s,u),\\
            \end{cases}$$  
            for all $t\in \mathfrak{I}\subset\mathbb{R}.$
        
            Considering that $\widehat{X}$ is a solution for the MCF, at $t=0$ we have
            \begin{equation}\label{soliton r3}
                \langle \sigma'(0)\Gamma(0)X+\Gamma'(0)X+\Theta'(0),N\rangle=H.
            \end{equation}       
            Therefore, from \eqref{normalvectorfield} we get
            \begin{eqnarray*}
                \langle \Gamma'(0)X,N\rangle &=& \langle \Gamma'(0)(\beta+uw),\dfrac{X_s\wedge X_u}{|X_s\wedge X_u|}\rangle\\
                                             &=& (EG-F^2)^{-1/2}\langle \Gamma'(0)(\beta+u w),\lambda w'+uw'\wedge w\rangle\\
                                             &=& (EG-F^2)^{-1/2}(\lambda\langle \Gamma'(0)\beta, w'\rangle+u\lambda\langle \Gamma'(0)w,w'\rangle\nonumber\\
                                             &+&u\langle\Gamma'(0)\beta,w'\wedge w\rangle+u^2\langle\Gamma'(0)w,w'\wedge w\rangle),
                \end{eqnarray*}
            and
            \begin{eqnarray*}
                \sigma'(0) \langle \Gamma(0)X,N\rangle &=&\sigma'(0) \langle \Gamma(0)(\beta+uw),\dfrac{X_s\wedge X_u}{|X_s\wedge X_u|}\rangle\\
                                                       &=& \sigma'(0)(EG-F^2)^{-1/2}\langle \Gamma(0)(\beta+u w),\lambda w'+uw'\wedge w\rangle\\
                                                       &=& \sigma'(0)(EG-F^2)^{-1/2}(\lambda\langle \Gamma(0)\beta, w'\rangle+u\lambda\langle \Gamma(0)w,w'\rangle\nonumber\\
                                                       &+&u\langle\Gamma(0)\beta,w'\wedge w\rangle+u^2\langle\Gamma(0)w,w'\wedge w\rangle)\\
                                                       &=& \sigma'(0)(EG-F^2)^{-1/2}(\lambda\langle \beta, w'\rangle +u\langle\beta,w'\wedge w\rangle),  
            \end{eqnarray*}
            where $\beta:=\beta(s)$ and $w:=w(s)$ are given by \eqref{regrada R}. 
            
            Let us define the following functions of $s$, in which $|w|=1$:
            \begin{eqnarray*}
                V &:=& \langle \Gamma'(0)\beta,\, w'\rangle;\quad
                W := \langle \Gamma'(0)w,\, w'\rangle;\quad
                Y := \langle \Gamma'(0)\beta,\, w'\wedge w\rangle;\\
                Z &:=& \langle \Gamma'(0)w,\, w'\wedge w\rangle;\quad
                C := \langle \Theta'(0), w'\rangle;\quad
                D := \langle \Theta'(0),\, w'\wedge w\rangle;\\
                A &:=& \sigma'(0)\langle \beta, w'\wedge w\rangle;\quad
                B:= \sigma'(0)\langle \beta,\,w'\rangle.
            \end{eqnarray*}   
            Thus, we get
            \begin{eqnarray}\label{theq1}
                \langle \Gamma'(0)X,N\rangle=\frac{\lambda V+u\lambda W+uY+u^2Z}{\sqrt{\lambda^2+u^2}}
            \end{eqnarray}
            and 
            \begin{eqnarray}\label{theq111}
            \langle \Gamma(0)X,N\rangle=\frac{\lambda B+uA}{\sqrt{\lambda^2+u^2}}.
            \end{eqnarray}
            Moreover, 
           \begin{eqnarray*}
                \langle \Theta'(0),N\rangle &=& \langle \Theta'(0),\dfrac{X_s\wedge X_u}{|X_s\wedge X_u|}\rangle\\
                                            &=& (EG-F^2)^{-1/2}\langle \Theta'(0),\lambda w'+u\cdot w'\wedge w\rangle\\
                                            &=& (EG-F^2)^{-1/2}(\lambda\langle \Theta'(0), w'\rangle+u\langle \Theta'(0), w'\wedge w\rangle).                         
            \end{eqnarray*}
            That is,
            \begin{eqnarray}\label{theq2}
                \langle \Theta'(0),N\rangle=\frac{\lambda C+u D}{\sqrt{\lambda^2+u^2}}.  
            \end{eqnarray}
            Combining  \eqref{media r3}, \eqref{soliton r3}, \eqref{theq1}, \eqref{theq111} and \eqref{theq2} we obtain the following equation:
            \begin{eqnarray*}
                \frac{\lambda V+u\lambda W+uY+u^2Z}{\sqrt{\lambda^2+u^2}}+\frac{\lambda C+u D}{\sqrt{\lambda^2+u^2}}+\frac{\lambda B+uA}{\sqrt{\lambda^2+u^2}}=-\frac{\lambda     F+\lambda^2J+u\lambda'+u^2J}{2(\lambda^2+u^2)^{3/2}},
            \end{eqnarray*}  
            
            A straightforward computation can prove that the above equation is a fourth order polynomial in $u.$ That is,
            \begin{multline}
                 2u^4Z+2u^3(\lambda W+Y+D+A)+2u^2(\lambda^2Z+\lambda(V+C+B)+\dfrac{1}{2}J)\nonumber\\
                 +u(2\lambda^3W+2\lambda^2(Y+D+A)+\lambda')+2\lambda^3(V+C+B)+\lambda F+\lambda^2J = 0. \label{polinomio2}
            \end{multline}   
            From the above equation we can gather that
            \begin{equation}\label{sistema r}
                \begin{cases}
                    Z=0;\\
                    \lambda W=-(Y+D+A);\\
                    \lambda(V+C+B)=-\dfrac{1}{2}J;\\
                    \lambda'=0;\\
                    \lambda F=0.  
                \end{cases}
            \end{equation} 
        
            Therefore,    
            \begin{equation*}\
                \begin{cases}
                    \langle \Gamma'(0)w, w'\wedge w\rangle=0;\\
                    \lambda\langle \Gamma'(0)w, w'\rangle+\langle \Gamma'(0)\beta, w'\wedge w\rangle+\langle \Theta'(0), w'\wedge w\rangle+\sigma'(0)\langle \beta, w'\wedge w\rangle=0;\\
                    \lambda(\langle \Gamma'(0)\beta, w'\rangle+\langle \Theta'(0), w'\rangle+\sigma'(0)\langle \beta,\,w'\rangle)+\dfrac{1}{2}\langle w\wedge w', w''\rangle=0;\\
                    \lambda'=0;\\
                    \lambda\langle \beta',w\rangle=0.   
                \end{cases}
            \end{equation*}  
        
            Conversely, suppose that \eqref{sistema r} is satisfied. Then,    
            \begin{eqnarray*}
                \langle L'(0)X,N\rangle-H &=& \langle \sigma'(0) \Gamma(0)X+\Gamma'(0)X+\Theta'(0),\,N\rangle-H\\
                                          &=& \frac{\lambda B+uA+\lambda V+u\lambda W+uY+u^2Z+\lambda C+u D}{\sqrt{\lambda^2+u^2}}\\
                                          &\;&+\frac{\lambda F+\lambda^2J+u\lambda'+u^2J}{2(\lambda^2+u^2)^{3/2}}\\
                                          &=& \frac{\lambda(V+C+B)+u(Y+D+A)+\lambda uW}{\sqrt{\lambda^2+u^2}}+\frac{\lambda^2J+u^2J}{2(\lambda^2+u^2)^{3/2}}\\
                                          &=& \frac{-\dfrac{1}{2}J-\lambda uW+u\lambda W}{\sqrt{\lambda^2+u^2}}+\frac{\lambda^2J+u^2J}{2(\lambda^2+u^2)^{3/2}}\\  
                                          &=& \frac{-2(\lambda^2+u^2)\dfrac{1}{2}J+(\lambda^2+u^2)J}{2(\lambda^2+u^2)^{3/2}}\\  
                                          &=& 0.
            \end{eqnarray*}    
    \end{proof}    
    
      \begin{theorem}\label{Sol Trivial}
            Let $X:U\subset \mathbb{R}^2\longrightarrow \mathbb{R}^{3}$ be a non-cylindrical ruled surface in $\mathbb{R}^{3}$, $$X(s,\,u)=\beta(s)+u w(s),$$
            where $\beta(s)$ is a curve in $\mathbb{R}^{3}$ and $w(s)\in T_{\beta(s)}\mathbb{R}^{3}$,\, $|w|=1$. 
            Suppose that $\widehat{X}^{t}(s,\,u)=L(t)X(s,\,u)$ is a self-similar solution to the MCF and $L(t)=\sigma(t)\Gamma(t)+\Theta(t)$ is a homothetic helicoidal motion in $\mathbb{R}^{3}$. If $w\subset\mathbb{S}^{2}$ is a geodesic, then $X$ must be trivial.
        \end{theorem}       
    \begin{proof}[Proof of Theorem \ref{Sol Trivial}]    
        By hypothesis, $X$ is a non-cylindrical ruled surface satisfying \eqref{regrada R}. Hence, $w$ is a curve in $\mathbb{S}^2$ parametrized by the arc length.
        
        Considering $\lambda=0$, from \eqref{gaussiana r3} we conclude that the Gaussian curvature $K$ of $X$ vanishes identically, so $X$ is a plane.
        
        Now, assume that $\lambda\neq0.$ Note that $J=\langle w\wedge w', w''\rangle$ is the geodesic curvature of $w$ as a curve of sphere $\mathbb{S}^2$. If $J=0$, then $w$ is a geodesic of $\mathbb{S}^2$. Therefore, $w''$ is a linear combination of $w$ and $w'$. Since $\langle w'', w\rangle=-1$ and $\langle w'', w'\rangle=0$, we have $w''=-w$. On the other hand, combining \eqref{beta'} with the last equation of \eqref{lemma1} we obtain $$\beta'=\lambda w\wedge w'.$$ Taking the derivative of the above equation we get 
         \begin{eqnarray*}
            \beta''&=&\lambda (w'\wedge w'+w\wedge w'')\\
            &=&\lambda (w'\wedge w'- w\wedge w)\\
                   &=& 0.
         \end{eqnarray*}
        We conclude that $\beta$ must be a straight line and orthogonal to $w$. Therefore, $X$ is a helicoid.
    \end{proof}
    \begin{lemma}\label{Triedo de Darboux}
        Let $X:U\subset \mathbb{R}^2\longrightarrow \mathbb{R}^{3}$ be a non-cylindrical ruled surface in $\mathbb{R}^{3}$ satisfying \eqref{regrada R}. Then, the set of vector fields $\{w,\,w',\, w\wedge w'\}$ satisfies the Darboux equations, i.e.,
        \begin{eqnarray*}
            \begin{cases}
                w''=J w'\wedge w - w;\\
                (w\wedge w')'=-J w',
            \end{cases}
        \end{eqnarray*} 
        were $J = \langle w\wedge w', w''\rangle$.    
    \end{lemma}     

    \begin{proof}[Proof of Lemma \ref{Triedo de Darboux}]
        Considering the orthonormal frame $\{w,w',w\wedge w'\}$ in $\mathbb{R}^3$, we have
        \begin{eqnarray}\label{bene1}
             w''=\rho_1 w + \rho_2 w' + \rho_3 w\wedge w'.  
        \end{eqnarray}
        Then, by similar arguments used in Theorem \ref{Sol Trivial} we conclude that
        \begin{align*}
            \rho_1=-1; && \rho_2=0; && \rho_3=J.
        \end{align*}
        Moreover, from \eqref{bene1} we get
        \begin{eqnarray*}
            (w\wedge w')'&=&w'\wedge w'+w\wedge w'';\\
                         &=&w\wedge (J w'\wedge w - w);\\
                         &=& J w\wedge(w\wedge w')- w\wedge w; \\
                         &=& J w\langle w,w'\rangle-Jw'\langle w,w\rangle; \\
                         &=& -Jw'.
             \end{eqnarray*}  
             
        Note that, $w\subset\mathbb{S}^2$ is a curve parametrized by arc length. Therefore, at $p=w(s)$ we can infer that $\{w'(s),\,-w(s),\, w(s)\wedge w'(s)\}$ is a Daboux trihedron.     
    \end{proof}
    \begin{proof}[Proof of Theorem \ref{C1}]    
        We will write $\beta$ and $w$ in coordinates, then by Lemma \ref{T3} we will explicitly provide $\beta$ and $w$. 
        
     From the proof of Theorem \ref{Sol Trivial}, we already know that if $\lambda=0$ then $X$ is trivial. So, from now on assume $\lambda\neq0.$ From Lemma \ref{T3} we have
        \begin{eqnarray}\label{sist111}
            \begin{cases}
                \langle \Gamma'(0)w, w'\wedge w\rangle=0;\\
                \lambda\langle \Gamma'(0)w, w'\rangle+\langle L'(0)\beta, w'\wedge w\rangle=0;\\
                J=-2\lambda\langle L'(0)\beta, w'\rangle;\\
                \langle \beta',w\rangle=0.   
            \end{cases}
        \end{eqnarray}
        Here, $J = \langle w\wedge w', w''\rangle$ is the geodesic curvature of $w\subset\mathbb{S}^2$ and $\lambda$ is constant. 
        
        Since $L=\sigma\Gamma+\Theta$ is a homothetic helicoidal motion in $\mathbb{R}^{3}$, without loss of generality, we can describe it by
        $$\Gamma(\tau)=
        \begin{pmatrix}
             \cos{\xi(\tau)} & -\sin{\xi(\tau)} &  0\\ 
             \sin{\xi(\tau)} &  \cos{\xi(\tau)} &  0\\ 
                           0 &                0 &  1\\ 
        \end{pmatrix}\quad\mbox{and}\quad \Theta(\tau)=(0,0,\zeta(\tau))$$
        such that 
        \begin{eqnarray}\label{isometriatheo2}
            \Gamma'(0)=a
            \begin{pmatrix}
                0 & -1 &  0\\ 
                1 &  0 &  0\\ 
                0 &  0 &  0\\ 
            \end{pmatrix}
            \quad\mbox{and}\quad\Theta'(0)=b(0,0,1),
        \end{eqnarray}
        where $a=\xi'(0)$, $b=\zeta'(0)$ and $c=\sigma'(0)$. Here, we are assuming $\xi(0)=0$. By Halldorsson  \cite[Section 2]{Halldorsson}, we can consider $\Theta$ parallel to the axis of rotation.     
        
        Suppose the family $\widehat{X}^{\tau}(s,u)=L(\tau)X(s,u)$ is a solution to the MFC on $\mathbb{R}^3$ with initial condition $X(s,u)=\beta(s)+u w(s)$, where  $\beta(s)=(x_1(s),x_2(s),x_3(s))$ and $w(s)=(y_1(s),y_2(s),y_3(s)).$      
        
        Since $|w|=1$, from the first equation of \eqref{sist111} we have
        \begin{eqnarray}\label{louc}
            0 &=& \langle \Gamma'(0)w, w'\wedge w\rangle\nonumber\\
              &=& a\{-y_2(y'_2y_3-y_2y'_3)+y_1(-y'_1y_3+y_1y'_3)\}\nonumber\\  
              &=& a\{y_3(-y_2y'_2-y_1y'_1)+y'_3(y^2_1+y^2_2)\}\nonumber\\  
              &=& a\{y^2_3y'_3+y'_3(1-y^2_3)\}\nonumber\\                      
              &=& ay'_3.
        \end{eqnarray}  
        Thus, either $a = 0$ or $y'_3 = 0$.
        
  Now, we must consider three possibilities:
        \begin{align*}
            \textbf{(I)}\quad a\neq0 \quad\text{and}\quad y'_3=0; && \textbf{(II)}\quad a=0 \quad\text{and}\quad y'_3=0; && \textbf{(III)}\quad a=0 \quad\text{and}\quad y'_3\neq0.
        \end{align*}
        
        \noindent{\bf Case (I)}: $a\neq0$ and $y'_3=0$.

      Under our settings, we can conclude that $$\langle w',e_3\rangle=0.$$    
        However, $y_3$ is constant, thus $w$ must be a flat curve in $\mathbb{R}^{3}$ and orthogonal to the axis of rotation $e_3$. 
        
        Since $| w | = 1$, locally $w$ is a circle of radius $r\leq1$ in $\mathbb{S}^2$, given by 
        \begin{eqnarray}\label{w}
            w(s)=\bigg( r\cos{(s/r)},r\sin{(s/r)},\sqrt{1-r^2} \bigg).
        \end{eqnarray}
        Hence, a straightforward computation gives us
        \begin{eqnarray*}
            w' &=& ( -\sin{(s/r)},\cos{(s/r)},0);\\\\
            w'\wedge w &=& \bigg(\sqrt{1-r^2}\cos{(s/r)},\sqrt{1-r^2}\sin{(s/r)},-r\bigg);\\\\
            w''&=& \bigg( -\dfrac{1}{r}\cos{(s/r)},-\dfrac{1}{r}\sin{(s/r)},0\bigg);\\\\
            \langle w\wedge w', w''\rangle&=&\dfrac{\sqrt{1-r^2}}{r}.
        \end{eqnarray*}       
        
        On the other hand, if $r=1$, then $X$ is a helicoid, i.e., a trivial solution for the MCF. In fact, when $r=1$ we have $J=\langle w\wedge w', w''\rangle=0$ and from Theorem \ref{Sol Trivial}, $X$ is a helicoid.
         
        Considering $r<1$, from \eqref{isometriatheo2} we have
        \begin{eqnarray*}
            \Gamma'(0)w &=& (-ar\sin{(s/r)},ar\cos{(s/r)},0);
        \end{eqnarray*}        
        \begin{eqnarray*}
            L'(0)\beta &=&\sigma'(0) \Gamma(0)\beta+\Gamma'(0)\beta+\Theta'(0) = (cx_1-ax_2,cx_2+ax_1,cx_3+b).
        \end{eqnarray*}         
        Then, from the second and third equations of the system \eqref{sist111} we have, respectively,
        \begin{eqnarray}\label{sist w 1}
            0 &=& \lambda\langle \Gamma'(0)w, w'\rangle+\langle L'(0)\beta, w'\wedge w\rangle= (\lambda a-cx_3-b)r\nonumber\\
              &\;&+\sqrt{1-r^2}[(c\cos(s/r)+a\sin(s/r))x_1+(c\sin(s/r)-a\cos(s/r))x_2]
        \end{eqnarray}    
        and 
        \begin{eqnarray}\label{sist w 2}
            0 &=& \lambda(\langle L'(0)\beta, w'\rangle+\dfrac{1}{2}\langle w\wedge w', w''\rangle\nonumber\\
              &=& \lambda[(a\cos(s/r)-c\sin(s/r))x_1+(a\sin(s/r)+c\cos(s/r))x_2]+\dfrac{\sqrt{1-r^2}}{2r}.
        \end{eqnarray}     
        Moreover, the last equation of the system \eqref{sist111} yields to
        \begin{eqnarray}\label{sist w 3}
        0 &=& \langle \beta',w\rangle\nonumber\\
          &=& rx'_1\cos{(s/r)}+rx'_2\sin{(s/r)}+x'_3\sqrt{1-r^2}.
        \end{eqnarray}         
        
        We can rewrite equations \eqref{sist w 1}, \eqref{sist w 2} and \eqref{sist w 3} to obtain    
        \begin{eqnarray}\label{sqv1}
            (c\cos(s/r)+a\sin(s/r))x_1+(c\sin(s/r)-a\cos(s/r))x_2 &=& \dfrac{r(cx_3+b-\lambda a)}{\sqrt{1-r^2}};
        \end{eqnarray}
        \begin{eqnarray}\label{sqv2}
            -(c\sin(s/r)-a\cos(s/r)) x_{1}+(c\cos(s/r)+a\sin(s/r))x_{2} &=& -\dfrac{\sqrt{1-r^2}}{2r\lambda}.
        \end{eqnarray}     
        \begin{eqnarray}\label{sqv3}
            x'_1\cos{(s/r)}+x'_2\sin{(s/r)} &=& -\dfrac{x'_3\sqrt{1-r^2}}{r}
        \end{eqnarray}

        Now, taking the derivative of \eqref{sqv2} and combining with \eqref{sqv1} we get    
        \begin{eqnarray*}
             x'_1[a\cos(s/r)-c\sin(s/r)] + x'_2[a\sin(s/r)+c\cos(s/r)]= \dfrac{(cx_3+b-\lambda a)}{\sqrt{1-r^2}}.
        \end{eqnarray*}  
       Considering the non-cylindrical ruled surfaces parameterized by lines of striction, we have $\langle \beta',\,w'\rangle=0$, combining this fact with \eqref{sqv3} and the above equation, we get       
        \begin{eqnarray*}
             \dfrac{(cx_3+b-\lambda a)}{\sqrt{1-r^2}}+\dfrac{ax'_3\sqrt{1-r^2}}{r}=0.
        \end{eqnarray*}
            So, 
        \begin{eqnarray}\label{da1}
            x'_3=-\dfrac{r(cx_3+b-\lambda a)}{a(1-r^2)}.
        \end{eqnarray}   
        
        From \eqref{lambda1}, we have $\beta'\wedge w=\lambda w' $. Thus,
        \begin{equation*}
            \lambda=\langle \beta'\wedge w, w'\rangle=x'_{3}r-\left(\sqrt{1-r^2}\right)[x'_1\cos(s/r)+x'_2\sin(s/r)].
        \end{equation*}      
        Combining equations $\eqref{sqv3}$ and \eqref{da1} with the $\lambda$ expression above we obtain        
        \begin{eqnarray}\label{lambda com x'_3}
            \lambda &=& x'_{3}r-\left(\sqrt{1-r^2}\right)\left[-\dfrac{x'_3\sqrt{1-r^2}}{r}\right]\nonumber\\
                    &=& \dfrac{x'_3}{r}.
        \end{eqnarray}           
        Then,
        \begin{equation}\label{lambda}
            \lambda=\dfrac{cx_3+b}{ar^2}.
        \end{equation}
        Since $\lambda$ is constant, we must have either $c=0$ or $c\neq0$ with $x_3(s)$ constant.
            
        If $c\neq0$, we must have $x_3(s)$ constant. Thus, from \eqref{da1}  we have  
        \begin{eqnarray*}
            x_3(s)=\dfrac{-b+\lambda a}{c}.
        \end{eqnarray*}         
        Substituting in equation \eqref{lambda} we get
        \begin{eqnarray*}
            (1-r^2)\lambda=0,
        \end{eqnarray*}
        which is a contradiction, since $r<1$ and $\lambda\neq0$.
        
        If $c=0$, from \eqref{lambda} we have 
        $$\lambda = \dfrac{b}{ar^2}$$
         and from \eqref{da1} we can infer that $$x'_3 = \dfrac{b}{ar^2}.$$
        On the other hand, by equation \eqref{lambda com x'_3} we have  $$(1-r)b=0.$$
        As $r<1$ we must have $b=0$, but this implies $\lambda=0$ which is a contradiction.
         
         Therefore, we can conclude that Case (I) can not occur.\\
        
        \noindent{\bf Case (II)}: $a=0$ and $y'_3=0$.
        
        Analogously to the previous case, when $y'_3=0$, $w$ will be given by \eqref{w}. Moreover, we can consider $r<1$. Taking $a=0$ in \eqref{isometriatheo2}, we have $\Gamma'(0)=0$. Therefore, we can rewrite \eqref{sist111} in the following form:
        \begin{eqnarray}\label{sist111 a=0}
            \begin{cases}
                \langle L'(0)\beta, w'\wedge w\rangle=0;\\
                \lambda\langle L'(0)\beta, w'\rangle+\dfrac{1}{2}\langle w\wedge w', w''\rangle=0;\\
                \langle \beta',w\rangle=0.   
            \end{cases}
        \end{eqnarray}         
        From the first and second equations of \eqref{sist111 a=0} we have, respectively,
        \begin{eqnarray}\label{sist a=0 1}
            0 &=& \langle L'(0)\beta, w'\wedge w\rangle\nonumber\\
              &=& c\sqrt{1-r^2} (x_1\cos(s/r) +  x_2\sin(s/r))-r(cx_3+b);
        \end{eqnarray}   
      and
        \begin{eqnarray}\label{sist a=0 2}
            0 &=& \lambda\langle L'(0)\beta, w'\rangle+\dfrac{1}{2}\langle w\wedge w', w''\rangle\nonumber\\
              &=& 2c\lambda(-x_1\sin(s/r) + x_2\cos(s/r)+\dfrac{\sqrt{1-r^2}}{r}.
        \end{eqnarray}      
        Moreover, the last equation of \eqref{sist111 a=0} gives us
        \begin{eqnarray}\label{sist a=0 3}
            0 &=& \langle \beta',w\rangle\nonumber\\
              &=& x'_1 r\cos(s/r) +  x'_2 r\sin(s/r)+x'_3 \sqrt{1-r^2}.
        \end{eqnarray}       
        
        We can assume that $c\neq0$. Otherwise, from \eqref{sist a=0 2} we obtain $\sqrt{1-r^2}=0$, which implies that $r=1$.
        
        
        and

        Now, take the first derivative of \eqref{sist a=0 2} and combine it with $\langle\beta',w'\rangle=0$ to obtain
        \begin{eqnarray*}
            x_1\cos(s/r)+x_2\sin(s/r)&=&0.
        \end{eqnarray*}
      Then, from the above equation and \eqref{sist a=0 1} we have
        \begin{eqnarray*}
            x_3 &=& -\dfrac{b}{rc}
        \end{eqnarray*}        
       Thus, from \eqref{sist a=0 3} we get $$x'_1 \cos(s/r) +  x'_2 \sin(s/r)=0.$$
        On the other hand,
        \begin{equation*}
            \lambda=\langle \beta'\wedge w, w'\rangle=-\left(\sqrt{1-r^2}\right)[x'_1\cos(s/r)+x'_2\sin(s/r)]=0.
        \end{equation*}       
        Which is a contradiction, since $\lambda\neq0$. \\
        
        \noindent{\bf Case (III)}: $a=0$ and $y'_3\neq0$.   
        
       Considering $a=0$, from Lemma \ref{T3} we have \eqref{sist111 a=0}. Then, taking the first order derivative of the first equation in \eqref{sist111 a=0} and considering Lemma \ref{Triedo de Darboux} we get
        \begin{eqnarray*}
            0 &=& \dfrac{d}{ds}\langle L'(0)\beta, w'\wedge w\rangle\\
              &=& \langle (\sigma'(0)\Gamma(0)\beta+\Gamma'(0)\beta+\Theta'(0))', w'\wedge w\rangle + \langle L'(0)\beta, (w'\wedge w)'\rangle\\
              &=& \langle c\beta', w'\wedge w\rangle + \langle L'(0)\beta, Jw'\rangle\\
              &=& c\langle \beta', w'\wedge w\rangle + J\langle L'(0)\beta, w'\rangle.
        \end{eqnarray*}       
       Then, combining the above  equation with \eqref{complambda} we get
       \begin{eqnarray}\label{Lbetaw'}
            c\lambda &=& J\langle L'(0)\beta, w'\rangle.
       \end{eqnarray}
       Thus, from the second equation of \eqref{sist111 a=0} we have     
       \begin{eqnarray*}
            2c\lambda^2=-J^2.
       \end{eqnarray*}
        
        It is worth to notice that if $c=0$, then $J=0$, and from Theorem \ref{Sol Trivial}, $X$ is trivial. Therefore, from now on we consider $c<0$.
        
        Therefore, $w$ is a curve on $\mathbb{S}^2$ of constant geodesic curvature $J=\sqrt{-2c\lambda^{2}}$. Taking the first order derivative of the second equation in \eqref{sist111 a=0}, 
        \begin{eqnarray*}
            0 &=& \dfrac{d}{ds} \left(\langle L'(0)\beta, w'\rangle+\dfrac{1}{2\lambda}J\right)\\
              &=& \langle (L'(0)\beta)', w'\rangle+\langle L'(0)\beta, w''\rangle\\
              &=& \langle c\beta', w'\rangle + \langle L'(0)\beta, J w'\wedge w - w\rangle\\
              &=& c\langle \beta', w'\rangle + J\langle L'(0)\beta, w'\wedge w\rangle-\langle L'(0)\beta, w\rangle.
        \end{eqnarray*}         
        Then, from \eqref{regrada R} and the firs equation of system \eqref{sist111 a=0}, we have $$\langle L'(0)\beta, w\rangle=0.$$ Note that, since $\{w,w',w\wedge w'\}$ is a orthonormal frame in $\mathbb{R}^3$, from the above equation, the first equation in \eqref{sist111 a=0} and \eqref{Lbetaw'} we obtain
        \begin{eqnarray*}
            L'(0)\beta &=& \dfrac{c\lambda}{J}w'
            \end{eqnarray*}
            which implies that
            \begin{eqnarray*}
      c\beta+\Theta'(0) &=& \dfrac{c\lambda}{J}w'.
        \end{eqnarray*}
        From \eqref{bene1}, a straightforward computation gives us                  
        \begin{eqnarray*}
            \beta' &=& \dfrac{\lambda}{J}w''\\ 
                   &=& \dfrac{\lambda}{J}(J w'\wedge w - w)\\ 
                   &=& \lambda w'\wedge w - \dfrac{\lambda}{J}w. 
        \end{eqnarray*} 
        From the third equation of \eqref{sist111 a=0} we obtain 
        $$ 0=\langle\beta',\,w\rangle= \lambda \langle w'\wedge w,\,w\rangle - \dfrac{\lambda}{J}\langle w,\,w\rangle. $$
       Therefore, 
        $\frac{1}{\sqrt{-2c}}=0$ which is a contradiction.
    \end{proof}

    \begin{proof}[Proof of Theorem \ref{C40}]
        Let $X:U\subset \mathbb{R}^2\longrightarrow \mathbb{R}^{3}$ be a surface of revolution in $\mathbb{R}^{3}$ satisfying \eqref{regrada R}.
        Consider a one parameter family of surfaces ${X}^{t}(s,u)=L(t)X(s,u)$ where $L(t)$ is a homothetic helicoidal motion of $\mathbb{R}^3$.
        We can rewrite $\widehat{X}$ in the following form
        \begin{equation*}
            \widehat{X}^{t}(s,u)=\sigma(t)\Gamma(t)X(s,u)+\Theta(t), 
        \end{equation*}
        where $\Gamma$ and $\Theta$ stand for rotations and translations in $\mathbb{R}^{3}$, respectively. Here, the dilation function $\sigma$ determines the scaling and $\sigma(0)=1.$
        That being said, $\widehat{X}^{t}$ is a solution to the MCF in $\mathbb{R}^3$ if and only if
        $$\begin{cases}
            \dfrac{\partial\widehat{X}^{t}}{\partial t}=H^{t}(s,u)N^{t}(s,u);\\\\
            \widehat{X}^{0}(s,u)= X(s,u),\\
        \end{cases}$$  
        for all $t\in \mathfrak{I}\subset\mathbb{R}.$
            
        Considering that $\widehat{X}$ is a solution for the MCF, at $t=0$ we have
        \begin{equation}\label{soliton r340}
                \langle \sigma'(0)\Gamma(0)X+\Gamma'(0)X+\Theta'(0),N\rangle=H.
        \end{equation}  
                
        Let $\Gamma$ and $\Theta$ be rotation and  translation, respectively, of $\mathbb{R}^3$ given by
        $$\Gamma(t)=
        \begin{pmatrix}
            \cos{\xi(t)} & -\sin{\xi(t)} & 0\\ 
            \sin{\xi(t)} &  \cos{\xi(t)} & 0\\ 
                       0 &             0 & 1\\ 
        \end{pmatrix}\quad\mbox{and}\quad \Theta(t)=(0,\,0,\,\zeta(t))$$
        such that 
        \begin{eqnarray}\label{isometriatheo2240}
            \Gamma'(0)=a
            \begin{pmatrix}
                0 & -1 &  0\\ 
                1 &  0 &  0\\ 
                0 &  0 &  0\\ 
            \end{pmatrix},\quad\Theta'(0)=b(0,\,0,\,1)\quad\mbox{and}\quad\sigma'(0)=c,
        \end{eqnarray}
        where $a=\xi'(0)$ e $b=\zeta'(0)$. Here, we are assuming $\xi(0)=0$ (cf. \cite{Halldorsson1}). 
           
        At this point, we can go faster in the computation since the strategy is the same of the past theorems. Thus, a straightforward computation from     \eqref{Nrevolution}, \eqref{media r31}, \eqref{soliton r340} and \eqref{isometriatheo2240} gives us 
        \begin{eqnarray}\label{edorevolucao}
            \phi(\phi''\psi'-\phi'\psi'')-\psi'(\phi'^2 + \psi'^2)=2\phi(\phi'^2 + \psi'^2)[c(\psi'\phi-\psi\phi')-b\phi'].
        \end{eqnarray}
           
        Now, consider $\alpha(s)=(0,\,\phi(s),\,\psi(s))$ a plane curve of curvature $\kappa$ given by
        $$\kappa =\dfrac{-\psi''\phi'+\phi''\psi'}{(\phi'^2 + \psi'^2)^{3/2}}.$$
        Using the curvature of $\alpha$ in \eqref{edorevolucao} we obtain
        \begin{eqnarray*}
            \kappa=\frac{1}{|\tau|}\left(2 c\langle\alpha,\,\eta\rangle+ 2b\langle e_3,\,\eta\rangle+\dfrac{\langle e_3,\,\tau\rangle}{\phi}\right),
        \end{eqnarray*}
        where $\tau=\alpha'(s)$, $\eta=(0,\psi',\,-\phi')$ and $e_3=(0,\,0,\,1)$.
    \end{proof}    
    \begin{lemma}\label{C2}
        Let $X:U\subset \mathbb{R}^2\longrightarrow \mathbb{R}^{3}$ be a cylindrical ruled surface in $\mathbb{R}^{3}$ satisfying \eqref{regrada R}, i.e., $w'=0$. 
        Then $\widehat{X}^{t}(s,\,u)=L(t)X(s,\,u)$ is a self-similar solution to the MCF with initial condition $X$ if and only if
        \begin{equation}\label{sode2}\
            \begin{cases}
                \langle \Gamma'(0)w, \beta'\wedge w\rangle=0\\
                \langle L'(0)\beta,\, \beta'\wedge w\rangle=\dfrac{1}{2(|\beta'|^{2}-\langle\beta',\,w\rangle^{2})}\langle \beta'',\, \beta'\wedge w\rangle,
            \end{cases}
        \end{equation}
        where $L(t)=\sigma(t)\Gamma(t)+\Theta(t)$ stands for a homothety in $\mathbb{R}^{3}$, $\Gamma$ is rotation, $\Theta$ translation and $\sigma$ dilation.  
        \end{lemma}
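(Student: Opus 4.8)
The plan is to parallel the proof of Lemma~\ref{T3}, but now for the degenerate (cylindrical) case $w'=0$, where $w=w(s)\equiv w_0$ is a fixed unit vector. First I would recompute the relevant geometric quantities for $X(s,u)=\beta(s)+uw_0$: since $X_s=\beta'(s)$ and $X_u=w_0$, the normal is $N=\dfrac{\beta'\wedge w_0}{|\beta'\wedge w_0|}$, which is independent of $u$, and $|\beta'\wedge w_0|^2=|\beta'|^2-\langle\beta',w_0\rangle^2$. The second fundamental form degenerates ($g=f=0$ as before, but now also the $u$-dependence in $e$ disappears), so the mean curvature reduces to $H=\dfrac{e}{2E}=\dfrac{\langle\beta'',\beta'\wedge w_0\rangle}{2(|\beta'|^2-\langle\beta',w_0\rangle^2)^{3/2}}$ — note this is $u$-independent.

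Next I would impose the self-similarity equation at $t=0$, namely $\langle\sigma'(0)\Gamma(0)X+\Gamma'(0)X+\Theta'(0),N\rangle=H$, exactly as in \eqref{soliton r3}. Writing $L'(0)X=\sigma'(0)X+\Gamma'(0)X+\Theta'(0)$ (using $\Gamma(0)=\mathrm{Id}$) and substituting $X=\beta+uw_0$, the left-hand side becomes $\langle L'(0)\beta,N\rangle+u\langle(\sigma'(0)\mathrm{Id}+\Gamma'(0))w_0,N\rangle$, since $\Theta'(0)$ contributes no $u$-term. Because $\langle w_0,N\rangle=\langle w_0,\beta'\wedge w_0\rangle/|\beta'\wedge w_0|=0$, the $\sigma'(0)$ part of the $u$-coefficient vanishes, leaving $\langle L'(0)\beta,N\rangle+u\langle\Gamma'(0)w_0,N\rangle=H$. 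Since $H$ and $\langle L'(0)\beta,N\rangle$ do not depend on $u$ while the remaining term is linear in $u$, matching coefficients forces $\langle\Gamma'(0)w_0,N\rangle=0$ and $\langle L'(0)\beta,N\rangle=H$. Multiplying both through by $|\beta'\wedge w_0|=(|\beta'|^2-\langle\beta',w_0\rangle^2)^{1/2}$ and clearing the denominator in $H$ yields precisely the two equations in \eqref{sode2}:
\begin{eqnarray*}
\langle\Gamma'(0)w,\beta'\wedge w\rangle=0,\qquad
\langle L'(0)\beta,\beta'\wedge w\rangle=\dfrac{\langle\beta'',\beta'\wedge w\rangle}{2(|\beta'|^2-\langle\beta',w\rangle^2)}.
\end{eqnarray*}

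For the converse, I would simply run the computation backwards: assuming \eqref{sode2}, the expression $\langle L'(0)X,N\rangle-H$ collapses term by term to $0$, as in the closing display of the proof of Lemma~\ref{T3}. The only genuine subtlety — and the step I expect to need the most care — is the regularity/nondegeneracy bookkeeping: one must assume $\beta'\wedge w_0\neq0$ (equivalently $|\beta'|^2>\langle\beta',w_0\rangle^2$, i.e.\ $\beta$ is nowhere tangent to the ruling direction) so that $N$ is well-defined and the division is legitimate; this replaces the role played by ``$\lambda\neq0$'' in the noncylindrical setting. Everything else is the same polynomial-in-$u$ matching argument, now of degree one rather than four, so no new ideas beyond Lemma~\ref{T3} are required.
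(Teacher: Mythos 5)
Your proposal is correct and follows essentially the same route as the paper: compute $N=\beta'\wedge w/|\beta'\wedge w|$, the degenerate second fundamental form and $H$ for $X=\beta+uw_0$, then match the constant and linear-in-$u$ coefficients of the soliton equation at $t=0$, using $\langle w_0,N\rangle=0$ to discard the dilation contribution to the $u$-term. The only blemish is the intermediate identity $H=e/(2E)$, which should read $H=Ge/\bigl(2(EG-F^2)\bigr)=e/\bigl(2(|\beta'|^{2}-\langle\beta',w\rangle^{2})\bigr)$ since $F=\langle\beta',w\rangle$ need not vanish; your final displayed formula for $H$ is nonetheless the correct one, so this is a typo rather than a gap.
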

    
        \begin{proof}[Proof of Lemma \ref{C2}]
            Let $X:U\subset \mathbb{R}^2\longrightarrow \mathbb{R}^{3}$ be a noncylindrical ruled surface in $\mathbb{R}^{3}$ satisfying \eqref{regrada R}.
            Consider a one parameter family of surfaces ${X}^{t}(s,u)=L(t)X(s,u)$ where $L(t)$ is a homothetic helicoidal motion of $\mathbb{R}^3$.
            We can rewrite $\widehat{X}$ in the following form
            \begin{equation*}
                \widehat{X}^{t}(s,u)=\sigma(t)\Gamma(t)X(s,u)+\Theta(t), 
            \end{equation*}
            where $\Gamma$ and $\Theta$ stand for rotations and translations in $\mathbb{R}^{3}$, respectively. Here, the dilation function $\sigma$ determines the scaling and $\sigma(0)=1.$
            That being said, $\widehat{X}^{t}$ is a solution to the MCF in $\mathbb{R}^3$ if and only if
            $$\begin{cases}
                \dfrac{\partial\widehat{X}^{t}}{\partial t}=H^{t}(s,u)N^{t}(s,u);\\\\
                \widehat{X}^{0}(s,u)= X(s,u),\\
            \end{cases}$$  
            for all $t\in \mathfrak{I}\subset\mathbb{R}.$    
    
            Considering that $\widehat{X}$ is a solution for the MCF, at $\tau=0$ we have
            \begin{equation}
                \langle \sigma'(0)X+\Gamma'(0)X+\Theta'(0),N\rangle=H.\nonumber
            \end{equation} 
            Moreover, the coefficients of the first and second fundamental formulas and the normal vector field are given by, respectively, 
            \begin{align*}
                E=|\beta'|^{2}; && F = \langle\beta',\,w\rangle; &&  G = 1.
            \end{align*}   
            \begin{align*}
                e=\dfrac{1}{\sqrt{|\beta'|^{2}-F^{2}}}\langle\beta'',\,\beta'\wedge w\rangle;&& f = g =0;\,&&  N=\dfrac{\beta'\wedge w}{\sqrt{|\beta'|^{2}-F^{2}}}.
            \end{align*} 
            Therefore, $$H=\frac{Eg-2fF+Ge}{2(EG-F^2)}=\dfrac{1}{2(|\beta'|^{2}-F^{2})^{3/2}}\langle\beta'',\,\beta'\wedge w\rangle.$$        On the other hand,
            \begin{eqnarray*}
                \langle\sigma'(0)X+\Gamma'(0)X+\Theta'(0),N\rangle=\dfrac{1}{\sqrt{|\beta'|^{2}-F^{2}}}\left(\langle L'(0)\beta,\, \beta'\wedge w\rangle+u\langle\Gamma'(0)w,\,\beta'\wedge w\rangle\right).
            \end{eqnarray*}
            Combining the two equations above we get the system of equations \eqref{sode2}.
        \end{proof}
    \begin{proof}[Proof of Theorem \ref{C3}]  
        Let $L=\sigma\Gamma+\Theta$ be an homothetic helicoidal motion of $\mathbb{R}^3$ given by
        $$\Gamma(t)=
        \begin{pmatrix}
            1 &            0 &            0 \\ 
            0 & \cos{\xi(t)} & -\sin{\xi(t)}\\ 
            0 & \sin{\xi(t)} &  \cos{\xi(t)}\\ 
        \end{pmatrix}\quad\mbox{and}\quad \Theta(t)=(\zeta(t),\,0,\,0)$$
        such that 
        \begin{eqnarray}\label{isometriatheo22}
            \Gamma'(0)=a
            \begin{pmatrix}
                0 & 0 &  0\\ 
                0 &  0 &  -1\\ 
                0 & 1 &  0\\ 
            \end{pmatrix},\quad\Theta'(0)=b(1,\,0,\,0)\quad\mbox{and}\quad\sigma'(0)=c,
        \end{eqnarray}
        where $a=\xi'(0)$, $b=\zeta'(0)$ and $\sigma'(0)=c$. Here, we are assuming $\xi(0)=0$ (cf. \cite{Halldorsson1}). 
                
        \noindent{\bf Item (I)} Suppose the family $\widehat{X}^{t}(s,u)=L(t)X(s,u)$ is a solution to the MFC on $\mathbb{R}^3$ with initial condition $X(s,u)=\beta(s)+u w(s)$, where $\beta(s)=(0,\,h(s),\,q(s))$ and $w(s)=(x_0,\,y_0,\,z_0).$ Thus,  $\beta'=(0,\,h',\,q')$, $\beta'\wedge w = (z_{0}h'-y_{0}q',\,x_{0}q',\,-x_{0}h')$ and $L'(0)\beta=(b,\,ch-aq,\,cq+ah).$

        Hence, from the first equation of system \eqref{sode2} we have
        \begin{eqnarray}\label{edo23}
            \langle \Gamma'(0)w,\, \beta'\wedge w\rangle=-ax_{0}(z_{0}q'+y_{0}h')=0. 
        \end{eqnarray}  
        Therefore, from the second equation of system \eqref{sode2} we have
        \begin{eqnarray}\label{edo13}
            2cx_0 (hq'-qh')+2b(z_{0}h'-y_{0}q')-2ax_{0}(qq'+hh')\nonumber\\
            =\dfrac{x_{0}(q'h''-q''h')}{(h')^{2}+(q')^{2}-(y_{0}h'+z_0q')^{2}}.
        \end{eqnarray}
           
        \noindent{\bf Item (II)} Now, let us consider the case where $\beta(s)=(h(s),\,q(s),\,0)$ and $w(s)=(x_0,\,y_0,\,z_0).$ Thus,  $\beta'=(h',\,q',\,0)$, $\beta'\wedge w = (z_{0}q',\,-z_{0}h',\,y_0h'-x_{0}q')$ and $L'(0)\beta=(ch+b,\,cq,\,aq).$
        Since $|w|=1$, from the first equation of system \eqref{sode2} we have
        \begin{eqnarray}\label{eqtheo31}
            \langle \Gamma'(0)w,\, \beta'\wedge w\rangle=a[(z_{0}^{2}+y_{0}^{2})h'-x_{0}y_{0}q']=a[(1-x_{0}^{2})h'-x_{0}y_{0}q']=0. 
        \end{eqnarray}  
        Therefore, from the second equation of system \eqref{sode2} we have
        \begin{eqnarray}\label{eqtheo32}
            2cz_0 (hq'-qh')+2bz_{0}q'+2aq(y_0h'-x_0q')\nonumber\\
            =\dfrac{z_{0}(q'h''-q''h')}{(h')^{2}+(q')^{2}-(x_{0}h'+y_0q')^{2}}.
        \end{eqnarray}
            
        Now, considering $\tau$, $\eta$ and $\kappa$ are the tangent vector, the normal vector and the curvature of $\beta$, respectively, we conclude our proof.
    \end{proof}
    \section{Self-similar solutions}\label{examples}   
        \subsection{MCF by surfaces of revolution} 
            Let $\alpha(s)=(0,\,\phi(s),\,\psi(s))$ be a plane curve parameterized by arc length satisfying Theorem \ref{C40}.
            Thus, combining $\phi'^2+\psi'^2=1$ with \eqref{caracteriSR} we get  
            \begin{eqnarray}\label{ccc1}
                \phi''=\psi'\left(2c(\phi\psi'-\phi'\psi) - 2b\phi'+\frac{\psi'}{\phi}\right);
            \end{eqnarray}
            \begin{eqnarray}\label{ccc2}
                \psi''=-\phi'\left(2c(\phi\psi'-\phi'\psi) - 2b\phi'+\frac{\psi'}{\phi}\right).
            \end{eqnarray}   
            It is important to highlight that $c$ and $b$ are constants related to dilations and translations of surface of revolution generated by $\alpha$, respectively. Therefore, when we have $c=0$ we do not have dilation and when $b=0$ we do not have translation. Moreover, if $c$ is negative the curve shrinks and for $c$ positive the curve expands over time.
     
            Let us exhibit some numerical solutions for \eqref{ccc1} and \eqref{ccc2}. Not always is possible to solve systems like this one. So, we can only see the solutions through a qualitative analysis. This approach is useful to understand the geometry of the solutions (cf. \cite{Halldorsson1,Halldorsson,Hungerbuhler,dosReis}). Here, we will only plot some solutions. Moreover, we will see that some of them behave like some solutions to the CSF in $\mathbb{R}^2$ given by \cite{Halldorsson1} while others don't.
            
            Let us describe the behavior of solutions for \eqref{ccc1} and \eqref{ccc2}:            
            
            \begin{figure}[ht]
                \begin{minipage}[b]{0.45\linewidth}
                	\centering
                	\includegraphics[scale=0.3]{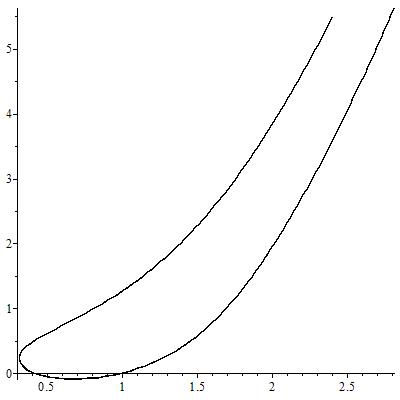}\\
                	\caption{$b=1;\,c=0$.\\}
                	\label{f1}
                \end{minipage} \hfill
                \begin{minipage}[b]{0.45\linewidth}
                	\centering
                	\includegraphics[scale=0.3]{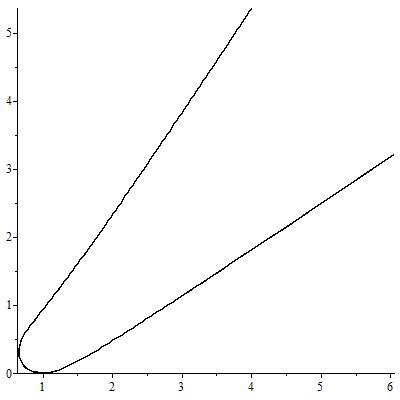}
                	\caption{$b=1;\,c=1$.}
                	\label{f2}
            	\end{minipage}
            \end{figure}
            {\bf Figure \ref{f1}:} With initial conditions $\phi(0)=1,\,\psi(0)=0,\,\phi'(0)=\sqrt{0,75},\,\psi'(0)=\sqrt{0,25}$, we have a curve translating in the Euclidean plane $\mathbb{R}^{2}$.  
            
            {\bf Figure \ref{f2}:} With initial conditions $\phi(0)=1,\,\psi(0)=0,\,\phi'(0)=1,\,\psi'(0)=0$, we have a curve translating and expanding in the Euclidean plane $\mathbb{R}^{2}$.
            \newpage
            \begin{figure}[h]
                \begin{minipage}[b]{0.45\linewidth}\centering
            	    \includegraphics[scale=0.3]{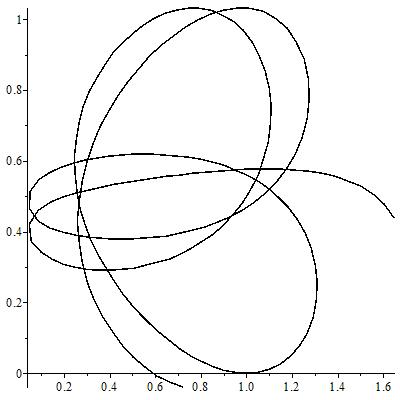}
            	    \caption{$b=1;\,c=-2$.}
            	    \label{f3}
            	\end{minipage} \hfill
            	\begin{minipage}[b]{0.45\linewidth}
            		\includegraphics[scale=0.3]{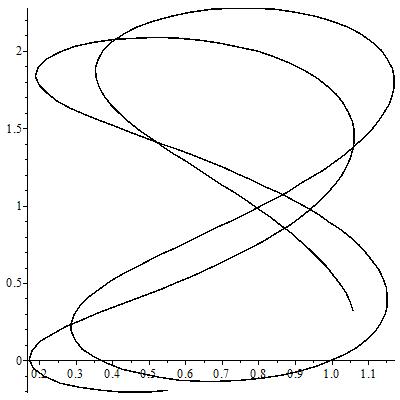}\centering
            	    \caption{$b=1;\,c=-1$.}
            	    \label{f4}
            	\end{minipage}  
            \end{figure}
            {\bf Figure \ref{f3}:} With initial conditions $\phi(0)=1,\,\psi(0)=0,\,\phi'(0)=1,\,\psi'(0)=0$, we have a curve translating and shrinking in the Euclidean plane $\mathbb{R}^{2}$.
                
            {\bf Figure \ref{f4}:} With initial conditions $\phi(0)=1,\,\psi(0)=0,\,\phi'(0)=1,\,\psi'(0)=1$, we have a curve translating and shrinking in the Euclidean plane $\mathbb{R}^{2}$.            
            
            \begin{figure}[ht]
            	\begin{minipage}[b]{0.45\linewidth}
            		\includegraphics[scale=0.3]{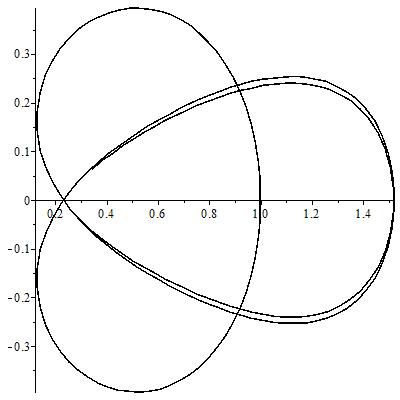}\centering
            	    \caption{$b=0 ;\,c=-2$}
            	    \label{f6}
            	\end{minipage} \hfill
            	\begin{minipage}[b]{0.45\linewidth}
            		\includegraphics[scale=0.3]{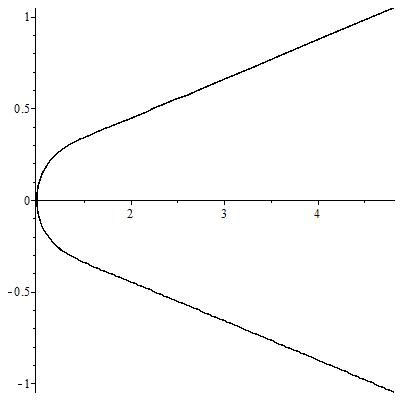}\centering
            	    \caption{$b=0 ;\,c=2$}
            	    \label{f5}
            	\end{minipage}
            \end{figure}       
            {\bf Figure \ref{f6}:} With initial conditions $\phi(0)=1,\,\psi(0)=0,\,\phi'(0)=0,\,\psi'(0)=1$, we have a curve expanding in the Euclidean plane $\mathbb{R}^{2}$.  
            
            {\bf Figure \ref{f5}:} With initial conditions $\phi(0)=1,\,\psi(0)=0,\,\phi'(0)=0,\,\psi'(0)=1$, we have a curve shrinking in the Euclidean plane $\mathbb{R}^{2}$.                
            
            \begin{figure}[ht]
            	\begin{minipage}[b]{0.45\linewidth}
            		\includegraphics[scale=0.3]{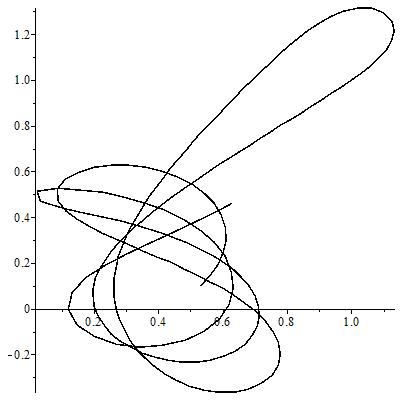}\centering
            	    \caption{$b=1 ;\,c=-4$}
            	    \label{f7}
            	\end{minipage} \hfill
            	\begin{minipage}[b]{0.45\linewidth}
            		\includegraphics[scale=0.3]{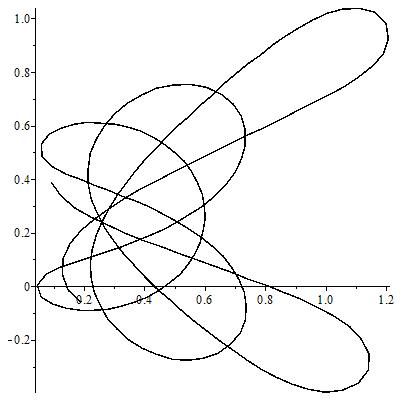}\centering
            	    \caption{$b=1 ;\,c=-4$}
            	    \label{f8}
            	\end{minipage} 
            \end{figure}   
            {\bf Figure \ref{f7}:} With initial conditions $\phi(0)=1,\,\psi(0)=1,\,\phi'(0)=1,\,\psi'(0)=1$, we have a curve translating and shrinking in the Euclidean plane $\mathbb{R}^{2}$.
                
            {\bf Figure \ref{f8}:} With initial conditions $\phi(0)=0,5,\,\psi(0)=0,5,\,\phi'(0)=-1,\,\psi'(0)=1$, we have a curve translating and shrinking in the Euclidean plane $\mathbb{R}^{2}$.            
            
            \begin{figure}[ht]	
                \begin{minipage}[b]{0.45\linewidth}
            	    \includegraphics[scale=0.3]{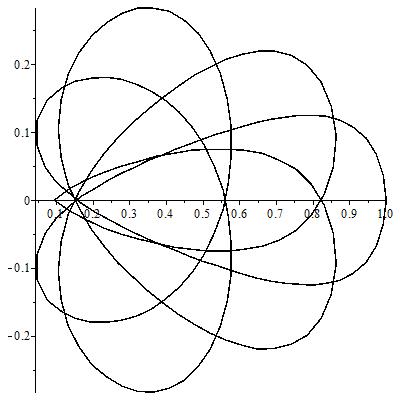}\centering
            	    \caption{$b=0 ;\,c=-6$}
            	    \label{f9}
                \end{minipage} \hfill
            	\begin{minipage}[b]{0.45\linewidth}
            		\includegraphics[scale=0.3]{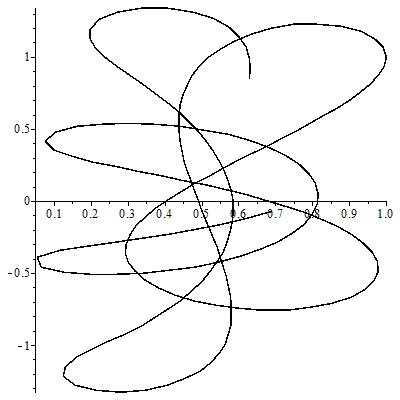}\centering
            	    \caption{$b=0 ;\,c=-2$}
            	    \label{f10}
            	\end{minipage} \hfill
            \end{figure}     
            {\bf Figure \ref{f9}:} With initial conditions $\phi(0)=1,\,\psi(0)=0,\,\phi'(0)=0,\,\psi'(0)=1$, we have a curve shrinking in the Euclidean plane $\mathbb{R}^{2}$.
            
            {\bf Figure \ref{f10}:} With initial conditions $\phi(0)=1,\,\psi(0)=1,\,\phi'(0)=0,\,\psi'(0)=1$, we have a curve shrinking in the Euclidean plane $\mathbb{R}^{2}$.            
 
        \subsection{MCF by cylindrical surfaces} 
            \begin{example}[Trivial solutions]
                The following are trivial cases from Theorem \ref{C3}. 
                
                Let us start with Item (I).
                \begin{itemize}
                    \item For $x_0=0$ and $b=0$, Item (I) is trivial.\  
                    \item For $x_0\neq0$ and $y_0=0$ (or $z_0=0$), from \eqref{edo23} we have $q'=0$ (or $h'=0$). Thus, from \eqref{edo13} we get $(bz_0-cq-ax_0 h)h'=0.$
                    since $q$ is constant, the only possibility is $h$ be constant. Therefore, $X(s,\,u)=(ux_0,\,h_0,\,q_0+uz_0)$ is a trivial solution for the Item (I).
                \end{itemize}

                In what follows, we focus on Item (II). 
                \begin{itemize}  
                    \item For $z_0=0$ and $a=0$, Item (II) is trivial.\
                    \item For $x_0=0$ and $h(s)=-\dfrac{b}{c}$, Item (II) is trivial.\
                 
                \end{itemize}
            \end{example}
            \begin{example}\label{grim}[Exact translating solutions for the MCF by cylindrical surfaces]  
                Now, we will consider Theorem \ref{C3}, Item (I). Moreover, we will assume $a=c=0$, i.e., just translating solutions.
   
                The qualitative analysis for self-similar solutions was consider by several authors. Thus, the aim of this example is to provide some exact solutions for Theorem \ref{C3}. As expected, the grim reaper solution is one of these exact solutions. 
   
                Thus, for \eqref{edo13} with $\beta(s)=(0,\,s,\,q(s))$ we get 
                $$2b(z_0 - y_0 q')=\dfrac{-x_0 q''}{1+q'^{2}-(y_0 + z_0 q')^{2}}.$$
   
                We provide now the first family of soliton solutions considering $b=1/2$, $x_0\neq0$, $y_0\neq0$ and $z_0=0$.  Then we have
                \begin{eqnarray}\label{t1}
                    y_0 q'=\dfrac{x_0 q''}{1+q'^{2}-y_0^{2}}.
                \end{eqnarray}
                Assuming, for instance, $y_0=1/2$ and $x_0=-1$, $$q(s)=\pm\dfrac{4\sqrt{3}}{3}\arctan(\frac{1}{2}\sqrt{e^{3s/2}-4}).$$
                Therefore,
                \begin{eqnarray}\label{sol1}
                    X(s,\,u)=(-u,\,s+\frac{u}{2},\,\dfrac{4\sqrt{3}}{3}\arctan(\frac{1}{2}\sqrt{e^{3s/4}-4})).
                \end{eqnarray}
                \begin{figure}[!htb]
                    \begin{minipage}[b]{0.45\linewidth}
	                    \centering
                	    \includegraphics[scale=0.4]{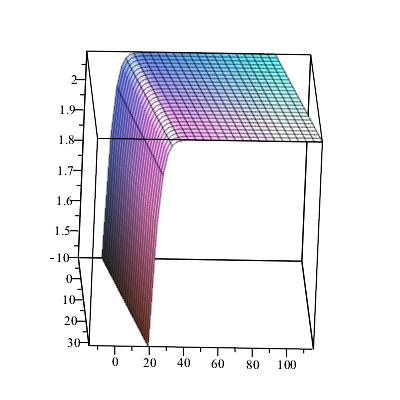}
                		\caption{Graph of \eqref{sol1}}
                	\end{minipage} \hfill
                	\begin{minipage}[b]{0.45\linewidth}
                	    \centering
                		\includegraphics[scale=0.4]{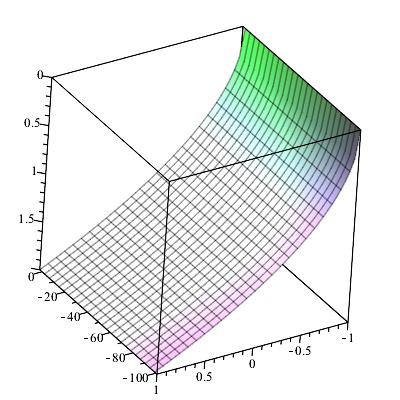}
                		\caption{Graph of \eqref{sol2}}
                	\end{minipage} \hfill
                \end{figure}
                
                Another case to consider is $x_0=-1$ and $y_0=1$. Thus, from \eqref{t1} we have $$q(s)=\pm\sqrt{2s+2}.$$
                Therefore, 
                \begin{eqnarray}\label{sol2}
                    X(s,\,u)=(-u,\,u+s,\,\sqrt{2s+2}).
                \end{eqnarray}

                The grim reaper solution is also an example. Begin by considering $x_0\neq0$, $z_0\neq0$ and $y_0=0$. Thus,
                $$2bz_0=\dfrac{-x_0 q''}{1+q'^{2}-( z_0 q')^{2}}.$$ For instance, let $b=x_0=1$ and $z_0=1/2$. Then we have $$-q''=1+\frac{3}{4}(q')^{2}.$$
                Thus, $$q(s)=\frac{2}{3}\log\left(\frac{3}{4}(\sin(\frac{\sqrt{3}}{2}s)-\cos(\frac{\sqrt{3}}{2}s))^{2}\right)$$ is a solution for the above ODE.
                Therefore, $$X(s,\,u)=\left(u,\,s,\,\frac{u}{2}+\frac{2}{3}\log\left(\frac{3}{4}(\sin(\frac{\sqrt{3}}{2}s)-\cos(\frac{\sqrt{3}}{2}s))^{2}\right)\right)$$ is the grim reaper solution.
                \begin{figure}[ht]
	                \centering
	                \includegraphics[scale=0.4]{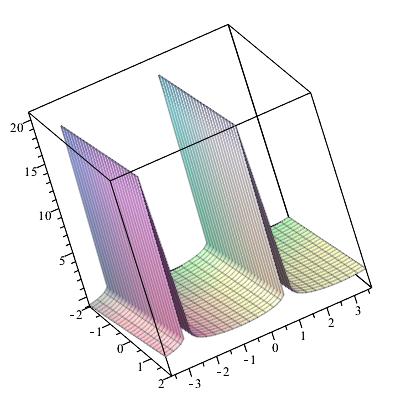}
	                \caption{Grim reaper}
                \end{figure}
            \end{example}
    \newpage
    \begin{acknowledgement}
        This work was done while the third author was a postdoc at Instituto de Matemática e Estatística, Universidade Federal de Goiás, Brazil. He is grateful to the hosted institution for the scientific atmosphere that it has provided during his visit.
    \end{acknowledgement}
	

\begin{thebibliography}{BB}
        \bibitem{CARMO GD}{M. P. do Carmo} - \textit{Differential geometry of curves and surfaces}: revised and updated second edition. Courier Dover Publications, (2016). MR3837152 
        \bibitem{Colding}{T. H. Colding, W. P. Minicozzi II \& E. K. Pedersen} - \textit{Mean Curvature Flow.} Bull. AMS. 52.2 (2015): 297-333. MR3312634
        \bibitem{leandro}{A. A. Cintra, B. Leandro \& H. dos Reis} - \textit{A family of MCF solutions for the Heisenberg Group}. to appear. Diff. Geom. Appl. 71 (2020): 101633. MR4091907
        \bibitem{Halldorsson1}{H. P. Halldorsson} - \textit{Self-similar solutions to the curve shortening flow.} Transactions AMS. (2012): 5285-5309. MR2931330
        \bibitem{Halldorsson}{H. P. Halldorsson} - \textit{Helicoidal surfaces rotating/translating under the mean curvature flow.} Geom. dedicata 162.1 (2013): 45-65. MR3009534
        \bibitem{Halldorsson2}{H. P. Halldorsson} - \textit{Self-similar solutions to the mean curvature flow in the Minkowski plane $\mathbb{R}^{1,\,1}$.} J. Reine Angew. Math. 2015.704 (2015): 209-243. MR3365779
        \bibitem{Hungerbuhler}{N. Hungerbuhler \& K. Smoczyk} - \textit{Soliton solutions for the mean curvature flow.} Diff. Int. Eq. vol 13 (10-12), (2000): 1321-1345. MR1787070
        
        
        
       
\bibitem{lopez2021}{R. López} - \textit{
Ruled Surfaces of Generalized Self-Similar Solutions of the Mean Curvature Flow.} Mediterr. J. Math. 18 (2021), no. 5, 197. MR4308321



        \bibitem{martin} {F. Mart\'in, J. P\'erez-Garc\'ia, A. Savas-Halilaj \& K. Smoczyk} - \textit{A characterization of the grim reaper cylinder.} J. Reine Angew. Math. no. 746, (2019): 209–234. MR3895630
        \bibitem{dosReis} 
        {H. dos Reis \& K. Tenenblat} - \textit{Soliton solutions to the curve shortening flow on the sphere}.  Proc. AMS. 47.11 (2019): 4955-4967. MR4011527
        \bibitem{dosReis1} 
        {H. dos Reis \& K. Tenenblat} - \textit{The mean curvature flow by parallel hypersurfaces}. 
        Proc. AMS. 146 (2018): 4867-4878. MR3856153
	\end{thebibliography}
\end{document}